
\documentclass[11pt, twoside, leqno]{article}

\usepackage{latexsym}
\usepackage{amsmath}
\usepackage{amsfonts}
\usepackage{amssymb}
\usepackage{amsthm}
\usepackage{amscd}
\usepackage{color}
\usepackage[all]{xy}
\usepackage{diagrams}

\textwidth=14.5cm \textheight=21.5cm \oddsidemargin=.5cm
\evensidemargin=.5cm \headsep=.8cm


\swapnumbers \theoremstyle{plain}
\newtheorem{thm}{Theorem}[section]

\newtheorem{pro}[thm]{Proposition}

\newtheorem{cor}[thm]{Corollary}

\theoremstyle{definition}
\newtheorem{Def}[thm]{Definition}

\newtheorem{exa}[thm]{Example}
\newtheorem{exas}[thm]{Examples}

\numberwithin{equation}{section}


\newcommand{\lra}{\longrightarrow}               

\newcommand{\ra}{\rightarrow}

\newcommand{\lmt}{\longmapsto}

\newcommand{\sst}{\subseteq}                  
\newcommand{\prt}{\partial}

\newcommand{\8}{{\mathcal C}^{\infty}}
\newcommand{\id}{{\rm id}}

\def\1q{\quad}
\def\2q{\quad\quad}
\def\3q{\quad\quad\quad}
\newcommand{\DT}{{\mathcal D}{\mathcal T}}
\newcommand{\morf}{(f, f_{\cA}, f_{\Om})}
\newcommand{\morg}{(g, g_{\cA}, g_{\Om})}
\newcommand{\dtr}[1]{(\cA_{#1}, \prt_{#1}, \Om_{#1})}

\newcommand{\A}{\mathbb{A}}             

\newcommand{\C}{\mathbb{C}}

\newcommand{\N}{\mathbb{N}}
\newcommand{\R}{\mathbb{R}}

\newcommand{\slb}[1]{\slshape \bfseries #1}

\newcommand{\cA}{{\mathcal A}}                   

\newcommand{\cC}{{\mathcal C}}

\newcommand{\cM}{{\mathcal M}}

\newcommand{\cS}{{\mathcal S}}
\newcommand{\cT}{{\mathcal T}}

\newcommand{\gra}{\alpha}

\newcommand{\grd}{\delta}
\newcommand{\gre}{\varepsilon}

\newcommand{\Om}{\Omega}
\newcommand{\om}{\omega}

\newcommand{\eeq}{\end{equation}}
\newcommand{\beq}{\begin{equation}}
\newcommand{\eal}{\end{align*}}
\newcommand{\bal}{\begin{align*}}
\newcommand{\eqn}{\end{eqnarray}}
\newcommand{\bqn}{\begin{eqnarray}}
\newcommand{\ega}{\end{gather*}}
\newcommand{\bga}{\begin{gather*}}

\begin{document}

\pagestyle{myheadings} \markboth{\centerline{\small{\sc
            M.~Fragoulopoulou and M.~H.~Papatriantafillou}}}
         {\centerline{\small{\sc Smooth manifolds vs differential triads}}}
\title{\bf Smooth manifolds vs differential triads}
\footnotetext{Keywords and phrases: Smooth manifold, sheaf, sheafification, differential triad, morphism of differential triads, differential, $Q-$algebra, (topological) spectrum.}

\footnotetext{Mathematics Subject Classification (2010): 18F15, 18F20, 54B40, 46J05, 53Z05}
\author{\bf M.~Fragoulopoulou and M.~H.~Papatriantafillou}

\date{}

\maketitle

\begin{center}
\emph{Dedicated to Professor Serban Stratila, on the occasion of his seventieth birthday}
\end{center}
\medskip

\begin{abstract}
We consider differentiable maps in the setting of {\em Abstract Differential
Geometry} and we study the conditions that ensure the uniqueness of
differentials in this setting. In particular, we prove that smooth maps between smooth manifolds admit a unique differential, coinciding with the usual one. Thus smooth manifolds form a full subcategory of the category of
differential triads, a result with physical implications.
\end{abstract}

\maketitle

\section{Introduction}

Circa 1990 A. Mallios used sheaf--theoretic methods to extend the {\em
mechanism} of the classical differential geometry (CDG) of smooth manifolds to
spaces, which do not admit the usual smooth structure (: smooth atlas); see \cite{MAL;Abstract, MAL;Note}. In this new setting of abstract
differential geometry (ADG) a large number of notions and results of CDG have
already been extended (\cite{MAL;VS, VASS;GPS}), becoming at the same time applicable to spaces with singularities and to quantum physics (see for instance, \cite{MAL;JTP, MAL-RAP, MAL-ROS2, MAL-ZAF}).

In ADG, the ordinary structure sheaf of smooth functions is replaced by a sheaf of abstract algebras $\cA$, admitting a differential $\prt$ (in the algebraic
sense), which takes values in an $\cA-$module $\Om$. A triplet $\dtr{}$ like
that is called a {\em differential triad}. Suitably defined morphisms organize
the differential triads into a category denoted by $\DT$ (\cite{MP;Mor}). Every smooth manifold defines a differential triad and every smooth map between
manifolds defines a morphism of the respective differential triads, so that the category $\cM an$ of smooth manifolds is embedded in $\DT$ (ibid.).

In the present paper we study the conditions assuring that a morphism in $\DT$
over a differentiable (in the abstract sense) map is uniquely determined, a
situation analogous to the classical ``uniqueness of differentials''.
Especially we prove that {\em a continuous map between manifolds, which is
differentiable in $\DT$, is also smooth in the usual sense, and its abstract
differential coincides with the ordinary one} (Theorem 4.5). This result makes differentials of maps between manifolds unique in both the abstract and the classical setting, while $\cM an$ becomes a {\em full subcategory} of $\DT$ (Theorem 4.6).

As a consequence, we have that phenomena described via CDG, have exactly the same interpretation in the more general setting of ADG.

\section{Preliminaries}

For the terminology applied the reader is mainly referred to \cite{MAL;Abstract}. However, for the reader's convenience, we recall the basic notions we use throughout the paper as those of differential triads and
of their morphisms; we also give a brief account of the way they form a category. Note that \emph{a smooth manifold will always be considered finite dimensional and 2nd countable}. Moreover, {\em all algebras considered are unital, commutati\-ve, associative and over the field $\C$ of complex numbers}; their units are denoted by $1$.

\begin{Def}[\cite{MAL;VS}]
Let $X$ be a topological space. A {\em differential triad} over $X$ is a
triplet $\delta = \dtr{}$, where $\cA$ is a sheaf of algebras over $X$, $\Om$ is an $\cA-$module and $\prt : \cA \ra \Om$ is a {\em Leibniz morphism}, i.e., a $\C-$linear sheaf morphism, that satisfies the {\em Leibniz condition\/}:
 \[
 \prt(\gra\beta) = \gra\prt(\beta) + \beta\prt(\gra), \qquad
 (\gra, \beta) \in \cA \times_X \cA,
  \]
  where ``$\times_X$" means fiber product over $X$.
\end{Def}

\begin{exas} {\bf (1) Smooth manifolds}. Every smooth manifold $X$ defines a differential triad
 \beq
 \grd^{\infty}_X = (\cC^{\infty}_X, d_X, \Om_X^1).
 \eeq
In this respect, $\cC^{\infty}_X$ is the structure sheaf of germs of smooth $\C$--valued functions on $X$; $\Om_X^1$ is the sheaf of germs of its smooth $\C$--valued 1--forms, namely, it consists of the smooth sections of the complexification of the cotangent bundle; and, $d_X$ is the sheafification of the usual differential. For details we refer to \cite[Vol. II, p. 9]{MAL;VS}.
We shall call (2.1) the {\em smooth differential triad} of $X$.

\smallskip
Apart of the smooth differential triads over manifolds, the above abstraction includes also includes differential triads on arbitrary topological spaces. We give a brief account of some basic examples. Details are found in \cite{MP;cdt}.

\smallskip
\noindent{\bf (2) Projective limits of manifolds}. It is known that the projective limit of a projective system of manifolds is not necessarily a manifold. M.E. Verona \cite{VERO} introduced a class of functions on such limits and defined a differential on these functions, in order to apply differential geometric considerations to the limits. Verona's construction is a (real) differential triad.

\smallskip
\noindent {\bf (3) Sheafification of K\"ahler's differential.}
Assume that $A$ is a unital, commutative and associative $\C$-algebra. We denote by $\mu$ the algebra multiplication, by $\phi$ the canonical bilinear map $\phi(x,y) = x \otimes y$ and by $m$ the linear map that corresponds to $\mu$, that is,
 \[
 (m \circ \phi)(x,y) = m(x \otimes y) = \mu(x,y) = xy,
 \]
for every $(x,y) \in A \times A$; we also set $I := \ker m$. Then $I$ is an ideal of $A \otimes_{\C} A$ and the map
 \[
 \grd_A : A \lra I/I^2 : x \lmt (x\otimes 1 - 1\otimes x) + I^2.
 \]
is a derivation (: {\em K\"ahler's differential}) (\cite[A.III. p.132]{BOURBA}).

Let now $(A_U, r^U_V)$ be a presheaf of algebras of the above type over an arbitrary topological space $X$, generating a sheaf $\cA$. Then the family $(A_U \otimes_{\C} A_U)$ generates $\cA \otimes_{\C} \cA$. The respective families of maps $(\mu_U)$, $(\phi_U)$ and $(m_U)$ are compatible with the presheaf restrictions, hence the family of kernels $(I_U := \ker m_U)$ is a presheaf of  $A(U)$--submodules of $(A_U \otimes_{\C} A_U)$ generating a sheaf ${\mathcal I}$. Besides,  $\grd_{\cA} = (\grd_{A_U})$ is proved to be a presheaf morphism whose factors are derivations. According to our definition, the triplet $(\cA, \grd_{\cA}, {\mathcal I}/{\mathcal I}^2)$ is a differential triad, called {\slb the sheafification of K\"ahler's differential}.

\smallskip
\noindent {\bf (4) Differential spaces.} Differential spaces and the subsequent differential-geometric concepts on them have been introduced by R. Sikorski (\cite{SIK1, SIK2}). Their sheaf-theoretic generalization, due to M. A. Mostow (\cite{MOST}), defines a differential triad.

\smallskip
\noindent {\bf (5) Differentiable spaces.} Here by ``differentiable spaces'' we refer to Spallek's $\infty$-standard differential spaces (see, for instance, \cite{Spal}), described below: An $\R$-algebra $A$ is a {\slb differentiable algebra}, if there is some $n \in \N$ and some closed ideal $\mathfrak{a}$ of $\8(\R^n)$, so that $A$ is (algebraically) isomorphic to the quotient
 \[
 A \cong \8(\R^n)/\mathfrak{a}.
 \]
Let $\frak{M}(A)$ be the spectrum of such an algebra. We denote by $A_U$ the ring of (equivalence classes of) fractions $\frac{a}{s}$, with $a, s \in A$ and $s(x) \neq 0$, for every $x \in U$. Then $(A_U)_{U \in \tau_{\frak{M}(A)}}$ is a presheaf of algebras on $\frak{M}(A)$ generating a sheaf $\tilde{A}$ called {\slb the structural sheaf on} $\frak{M}(A)$.

Now a pair $(X,{\mathcal O})$, where ${\mathcal O}$ is a sheaf of algebras over a topological space $X$, is called a {\slb differentiable space}, if every point $x \in X$ has an open neighbourhood $U$ such that $(U,{\mathcal O}_U)$ is isomorphic to a pair $(\frak{M}(A), \tilde{A})$, as above.

The sheafification of K\"ahler's differential provides the spectrum $\frak{M}(A)$ of any differentiable algebra $A$ with a differential triad $(\tilde{A}, \prt_A, \Om_A)$, and the local coincidence of a differentiable space $(X,{\mathcal O})$ with some $(\frak{M}(A),\tilde{A})$ provides an ${\mathcal O}$-module $\Om$ and a sheaf morphism $\prt : {\mathcal O} \ra \Om$, so that $({\mathcal O},\prt,\Om)$ is a differential triad over $X$; for details, see \cite{LNotes}.

\smallskip
\noindent {\bf (6) Differential algebras of generalized functions.}
A special case of an algebra which is a quotient of a functional algebra by a certain ideal has been defined by E. E. Rosinger \cite{ROS1}: Let $\emptyset \neq X \sst \R^n$ open. The corresponding {\slb nowhere dense differential algebra of generalized functions} on $X$ is the quotient
 \[
 A_{\rm{nd}}(X) := (\8(X,\R))^{\N}/{\mathcal I}_{\rm{nd}}(X)
 \]
where ${\mathcal I}_{\rm{nd}}(X)$ is the {\slb nowhere dense ideal} consisting of all the sequences $w = (w_m)_{m \in \N}$ of smooth functions $w_m \in \8(X,\R)$ which satisfy an ``{\slb asymptotic vanishing property}''.
Then $A_{\rm{nd}} = (A_{\rm{nd}}(U))_{U \in \tau_X}$ with the obvious restrictions is a presheaf of associative, commutative, unital algebras, inducing a (fine and flabby) sheaf $\cA$ on $X$. Next, an $\cA$-module $\Om$ is defined: for every $U \in \tau_X$, $\Om(U)$ is the free $A_{\rm{nd}}(U)$-module of rank $n$, with the free generators $d_i x_1, \dots, d_i x_n$. Consequently, the elements of $\Om(U)$ take the form
 \[
 \sum_{i=1}^n V_i \, d_i x_i,
 \]
where $V_i \in A_{\rm{nd}}(U)$. Finally, the differential $\prt : \cA \ra \Om$ is defined by the presheaf morphism $(\prt_U)_{U \in \tau_X}$, with
 \[
 \prt_U(V) = \sum_{i=1}^n \prt_i(V) \, d_i x_i
 \]
where $\prt_i$ denotes the usual $i$-partial derivation. Hence a differential triad $(\cA, \prt, \Om)$ is obtained {\em ``including the largest class of singularities dealt with so far''}. The algebras consisting the present structure sheaf {\em ``contain the Schwartz distributions''}, while they also {\em ``provide  global solutions for arbitrary analytic nonlinear PDEs. Moreover, unlike the distributions, and as a matter of physical interest, these algebras can deal with the vastly larger class of singularities which are concentrated on arbitrary closed, nowhere dense subsets and, hence, can have an arbitrary large positive Lebesgues measure''} (\cite[Abstract]{MAL-ROS1}).
\end{exas}

\medskip
Due to specificities of sheaf theory and the adjunction of the functors $f_*$
and $f^*$ induced by a continuous map $f$, there are three equivalent ways to
introduce the notion of a morphism of differential triads (cf. \cite{MP;Mor,
MP;pre-Lie, MP;calc}). The one in \cite{MP;Mor} is a
straightforward generalization of the situation we have in the theory of
manifolds, and it is the most suitable for our purposes here. First we notice
that if $\grd_X := \dtr{X}$ is a differential triad over $X$ and $f : X \ra Y$
is a continuous map, then {\em the push-out of\/ $\grd_X$ by $f$}
 \[
 f_*(\grd_X) \equiv (f_*(\cA_X), f_*(\prt_X), f_*(\Om_X))
 \]
is a differential triad over $Y$.

\begin{Def}
Let $\grd_X = \dtr{X}$ and $\grd_Y = \dtr{Y}$ be differential triads over the
topological spaces $X$ and $Y$, respectively. A {\em morphism of differential
triads} $\hat{f} : \grd_X \ra \grd_Y$ is a triplet $\hat{f} = \morf$, where

(i) $f : X \ra Y$ is continuous;

(ii) $f_{\cA} : \cA_Y \ra f_*(\cA_X)$ is a unit preserving morphism of sheaves
of algebras;

(iii) $f_{\Om} : \Om_Y \ra f_*(\Om_X)$ is an $f_{\cA}-$morphism, namely, it is a morphism of sheaves of additive groups, satisfying
 \[
 f_{\Om}(aw) = f_{\cA}(a)f_{\Om}(w), \  \2q \forall \ (a,w) \in \cA_Y \times_Y \Om_Y;
 \]

(iv) The diagram
 \begin{equation*}
 \begin{diagram}
 \cA_Y & \rTo^{f_{\cA}} & f_*(\cA_X) \\
 \dTo^{\prt_Y} & & \dTo_{f_*(\prt_X)} \\
 \Om_Y & \rTo_{f_{\Om}} & f_*(\Om_X)
 \end{diagram} \tag*{\text{\sc{Diagram 1}}}
 \end{equation*}
is commutative.

Extending the standard terminology, we shall say that a continuous map $f : X \ra Y$ is {\em differentiable}, if it is completed into a morphism $\hat{f} = \morf$.
Besides, we say that $f_{\Om}$ is a {\em differential} of $f$.
\end{Def}

\medskip
If $\grd_X, \grd_Y, \grd_Z$ are differential triads over the topological spaces $X,Y,Z$, respectively, and $\hat{f} = \morf : \grd_X \ra \grd_Y$, $\hat{g} =
\morg : \grd_Y \ra \grd_Z$ are morphisms, setting
 \begin{equation}
 \begin{gathered}
 (g \circ f)_{\cA} := g_*(f_{\cA}) \circ g_{\cA} \ \text{ and } \
 (g \circ f)_{\Om} := g_*(f_{\Om}) \circ g_{\Om}
 \end{gathered}
 \end{equation}
we obtain a morphism
 \begin{equation}
 \widehat{g \circ f} = (g \circ f, (g \circ f)_{\cA}, (g \circ f)_{\Om}) :
 \grd_X \ra \grd_Z.
 \end{equation}
The differential triads, their morphisms and the {\em composition law} defined
by (2.2) and (2.3) form a category, which will be denoted by $\DT$. Note that
the identity $\id_{\grd}$ of a differential triad $\grd = \dtr{}$ over $X$ is
the triplet $(\id_X, \id_{\cA}, \id_{\Om})$.

\begin{exa}
Consider the smooth manifolds $X$ and $Y$ provided with their smooth
differential triads $\grd^{\infty}_X = (\cC^{\infty}_X, d_X, \Om_X^1)$ and
$\grd^{\infty}_Y = (\cC^{\infty}_Y, d_Y, \Om_Y^1)$, respectively, and let $f :
X \ra Y$ be a smooth map. Then, for every $V \sst Y$ open, set
 \begin{gather*}
 \cC^{\infty}_Y(V) \equiv \cC^{\infty}(V,\C) \ \text{ and} \\
 f_*(\cC^{\infty}_X)(V) : = \cC^{\infty}_X(f^{-1}(V)) \equiv \cC^{\infty}(f^{-1}(V),\C).
 \end{gather*}
The map
 \beq
 f_{\cA V} : \cC^{\infty}(V) \lra \cC^{\infty}(f^{-1}(V)) : \gra \lmt \gra \circ f
 \eeq
is a unit preserving algebra morphism, while the family $(f_{\cA V})_V$ is a
presheaf morphism giving rise to a unit preserving algebra sheaf morphism
$f_{\cA} : \cC^{\infty}_Y \ra \cC^{\infty}_X$. On the other hand, the respective tangent map $Tf : TX \ra TY$ defines the so--called {\em pull--back of the smooth 1--forms by $f$}
 \beq
 \begin{gathered}
 f_{\Om V} : \Om_Y^1(V) \lra \Om_X^1(f^{-1}(V)) :
 \om \lmt \om \circ Tf.
 \end{gathered}
 \eeq
Note that $\om \circ Tf$ is a standard notation in CDG, with
 \[
 (\om \circ Tf)_x(u) = \om_{f(x)}(T_xf(u)), \qquad x \in X, \ u \in T^{\C}_xX,
 \]
where $T^{\C}_xX$ is the complexification of the tangent space of $X$ at $x$ and $T_xf$ stands also for the extension of the tangent map $T_xf$ on $T^{\C}_xX$. Then $f_{\Om V}$ is an $f_{\cA V}-$morphism and the family $(f_{\Om V})_V$ is a presheaf morphism yielding an $f_{\cA}-$morphism $f_{\Om} : \Om^1_Y \ra f_*(\Om_X^1)$. Note that, if $(V,\psi)$ is a chart of $Y$ with coordinates $(y_1,\dots,y_n)$, and $\om \in \Om_Y^1(V)$, then there are $\gra_i \in \8(V,\C)$, $i = 1,\dots,n$, with $\om = \sum_{i=1}^n \gra_i \cdot d_Yy_i$. In this case the pull--back of $\om$ by $f$ is given by
 \beq
 f_{\Om V}(\om) = \sum_{i=1}^n (\gra_i \circ f) \cdot (d_Yy_i \circ Tf).
 \eeq
The commutativity of Diagram 1 is equivalent to the chain rule, therefore
 \[
 \morf : \grd^{\infty}_X \ra \grd_Y^{\infty}
 \]
is a morphism in $\DT$.
\end{exa}

Clearly, if $\cM an$ is the category of smooth manifolds, the functor
 \beq
 F : \cM an \ra \DT,
 \eeq
where $F(X)$ is the smooth differential triad $\grd_X^{\infty}$ and $F(f) = \morf$, described in Example 2.4, is an embedding.

\section{Existence and uniqueness of morphisms}

In the above abstract approach of differentiability two problems arise:

(1) For arbitrary algebra sheaves, the existence of a morphism extending a map is not ensured, even for very simple mappings (for instance, the constant map);

(2) likewise, the uniqueness of a morphism over a fixed $f$ (the
analogue of the uniqueness of differentials) is not ensured.

\medskip
Regarding (1), let $X, Y$ be topological spaces provided by the differential triads $\grd_X$ and $\grd_Y$ and let $c$ denote some fixed element in $Y$ and $c : X \ra Y$ the respective constant map. Then, for every $V \sst Y$ open, with $c \in V$,
 \[
 c_*(\cA_X)(V) = \cA_X(c^{-1}(V)) = \cA_X(X),
 \]
while, if $c \notin V$, then $c_*(\cA_X)(V) = \emptyset$. Thus $c_*(\cA_X)$ is
a sheaf over the space $\{c\}$ (at least for $T_{1}-$spaces), whose (unique) stalk is the space of global sections $\cA_X(X)$, and the question of differentiability of the constant map $c$ reduces to the question of the existence of a unit preserving algebra morphism
 \[
 c_{\cA} : \cA_{Y,c} \lra \cA_X(X),
 \]
and, similarly, of a $c_{\cA}-$morphism
 \[
 c_{\Om} : \Om_{Y,c} \lra \Om_X(X),
 \]
making Diagram 1 commutative. Note that $\cA_{Y,c}, \Om_{Y,c}$ stand for the corresponding stalks of $\cA_{Y}, \Om_{Y}$ at $c$. The existence of a non--trivial (: unit preserving) algebra morphism is not assured, of course,  in the general case.

However, we obtain the differentiability of the constant map, if the sheaf
$\cA_Y$ is functional. In this respect, we recall that a {\em functional
algebra sheaf} over $Y$ (see \cite[Vol.~I, p.~49]{MAL;Abstract} and \cite{MAL;CM}) we mean a sheaf of algebras which is a subsheaf of the sheaf
$\cC_Y$ of germs of continuous $\C-$valued functions on $Y$. In this respect, we have

\begin{pro}
If $\grd_I = \dtr{I}$ are differential triads over the spaces $I = X, Y$ and
$\cA_Y$ is functional, then every constant map $c : X \ra Y$ is differentiable.
\end{pro}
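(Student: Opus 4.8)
The plan is to exhibit the required morphism concretely, using the fact that a functional algebra sheaf lets us evaluate germs at points. First I would fix the element $c\in Y$ determining the constant map $c:X\ra Y$. As observed just above the statement, the push-out sheaves $c_*(\cA_X)$ and $c_*(\Om_X)$ are supported at $\{c\}$ with stalks $\cA_X(X)$ and $\Om_X(X)$, so the whole problem reduces to producing a unit-preserving algebra morphism $c_{\cA}:\cA_{Y,c}\ra\cA_X(X)$, a compatible $c_{\cA}$-morphism $c_{\Om}:\Om_{Y,c}\ra\Om_X(X)$, and checking commutativity of Diagram 1 at the single relevant stalk.

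The key step is the construction of $c_{\cA}$. Since $\cA_Y$ is functional, it is a subsheaf of $\cC_Y$, so every germ $\xi\in\cA_{Y,c}$ is the germ at $c$ of an actual continuous $\C$-valued function defined on some open neighbourhood of $c$; evaluation at $c$ is well defined on germs and gives a value $\xi(c)\in\C$. I would then define
\[
 c_{\cA}:\cA_{Y,c}\lra\cA_X(X):\xi\lmt \xi(c)\cdot 1,
\]
i.e. send a germ to the corresponding constant global section of $\cA_X$ (constant sections exist because $\cA_X$ is a sheaf of unital algebras over $\C$, so $\C\cdot 1\sst\cA_X(X)$). This is visibly $\C$-linear, multiplicative, and sends the germ of the constant function $1$ to $1$, hence is a unit-preserving algebra morphism. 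For the module part, I would set $c_{\Om}$ to be the zero map $\Om_{Y,c}\ra\Om_X(X)$. It is trivially additive, and it is a $c_{\cA}$-morphism since $c_{\Om}(a w)=0=c_{\cA}(a)\cdot 0=c_{\cA}(a)c_{\Om}(w)$.

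Finally I would verify Diagram 1. Going around via $\prt_Y$ then $c_{\Om}$ gives $0$. Going around via $c_{\cA}$ then $c_*(\prt_X)=\prt_X$ gives $\prt_X\big(\xi(c)\cdot 1\big)=\xi(c)\,\prt_X(1)$, and $\prt_X(1)=0$ because $\prt_X$ is a Leibniz morphism: $\prt_X(1)=\prt_X(1\cdot 1)=1\cdot\prt_X(1)+1\cdot\prt_X(1)=2\prt_X(1)$, so $\prt_X(1)=0$; hence this path also yields $0$. Thus $\hat c=(c,c_{\cA},c_{\Om})$ is a morphism in $\DT$ and $c$ is differentiable. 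I do not anticipate a serious obstacle here; the only point requiring care is the bookkeeping that identifies maps out of the one-point-supported sheaves $c_*(\cA_X)$, $c_*(\Om_X)$ with maps into the global section modules $\cA_X(X)$, $\Om_X(X)$, and the observation — already implicit in the excerpt — that functionality of $\cA_Y$ is exactly what supplies the evaluation $\xi\mapsto\xi(c)$ that makes a nontrivial unit-preserving morphism available.
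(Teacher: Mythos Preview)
Your proposal is correct and follows essentially the same route as the paper: use functionality of $\cA_Y$ to define $c_{\cA}$ as evaluation at $c$ (landing in the constants inside $\cA_X(X)$), take $c_{\Om}=0$, and observe that $\prt_X\circ c_{\cA}=0$ forces Diagram~1 to commute. Your write-up is slightly more explicit than the paper's (you spell out $\prt_X(1)=0$ and the $c_{\cA}$-compatibility of the zero map), but there is no substantive difference in strategy.
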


\begin{proof}
Since $\cA_X$ has a unit, it contains the constant sheaf $X \times \C$. Thus
every $k \in \C$ can be considered as a global section of $\cA_X$. Besides, for every $a \in \cA_{Y,c}$, there is an open neighborhood $V$ of $c$ in $Y$ and
$\gra \in \cA_Y(V) \sst \cC(V,\C)$, with $a = [\gra]_c$. Setting
 \[
 c_{\cA} : \cA_{Y,c} \lra \cA_X(X) : a \lmt \gra(c)
 \]
we obtain a unit preserving algebra morphism.

On the other hand, $c_{\cA}(a) \in \C$ implies $\prt_X \circ c_{\cA} = 0$. Thus the zero morphism $0 : \Om_{Y,c} \ra \Om_X(X)$ completes the triplet $(c,
c_{\cA}, c_{\Om} = 0)$, so that Diagram 1 is commutative.
\end{proof}

\smallskip
Concerning (2), although there may be infinitely many pairs $(f_{\cA},
f_{\Om})$ making $f$ differentiable, due to the commutativity of Diagram 1,
in certain cases the pairs need to satisfy some restrictions: For instance, if
the sheafification of K\"ahler's differential is considered, then $f_{\cA}$ determines $f_{\Om}$ (see \cite[Vol.~II, p.~327]{MAL;VS}
and \cite{MP;cdt}). More generally, we have the following

\begin{pro}
If $\morf$ is a morphism in $\DT$, then $f_{\Om}$ is uniquely determined by
$f_{\cA}$ on the image ${\rm Im} \,\prt_Y$ of $\prt_Y$.

Conversely, if $\prt_X$ vanishes only on the constant subsheaf $X \times \C
\sst \cA_X$, then $f_{\cA}$ is uniquely determined by $f_{\Om}$.
\end{pro}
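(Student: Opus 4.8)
The plan is to exploit the commutativity of Diagram 1 directly, working stalkwise. For the first assertion, let $w \in \mathrm{Im}\,\prt_Y$, say $w = \prt_Y(a)$ for some $a \in \cA_Y$ over a common point $y \in Y$. Then the commutativity of Diagram 1 forces
\[
f_{\Om}(w) = f_{\Om}(\prt_Y(a)) = f_*(\prt_X)(f_{\cA}(a)),
\]
so the value of $f_{\Om}$ on $w$ is completely pinned down by $f_{\cA}$ (together with the fixed data $\prt_X$). Since $f_{\Om}$ is additive and an $f_{\cA}$-morphism, its values on the subsheaf generated by $\mathrm{Im}\,\prt_Y$ are likewise determined; the precise statement, as phrased, is just that any two differentials agreeing with $f_{\cA}$ coincide on $\mathrm{Im}\,\prt_Y$, which is exactly the displayed identity. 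There is no real obstacle here — it is a one-line diagram chase, and the only care needed is to phrase it at the level of stalks (or on sections over open sets where $a$ is defined) since $f_{\cA}$ and $f_{\Om}$ are only sheaf morphisms.

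For the converse, suppose $(f, f_{\cA}, f_{\Om})$ and $(f, f_{\cA}', f_{\Om})$ are both morphisms over the same $f$ with the same $f_{\Om}$, and assume $\prt_X$ vanishes only on $X \times \C \sst \cA_X$. Set $g_{\cA} := f_{\cA} - f_{\cA}'$, a $\C$-linear sheaf morphism $\cA_Y \to f_*(\cA_X)$ (note: not an algebra morphism, but we only need linearity). From the commutativity of Diagram 1 for both triplets and the fact that the left-hand side $\prt_Y$ is the same in both, we get
\[
f_*(\prt_X)\bigl(f_{\cA}(a)\bigr) = f_{\Om}(\prt_Y(a)) = f_*(\prt_X)\bigl(f_{\cA}'(a)\bigr),
\qquad a \in \cA_Y,
\]
hence $f_*(\prt_X)(g_{\cA}(a)) = 0$ for every $a$. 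By hypothesis on $\prt_X$, this means $g_{\cA}(a)$ lies in the constant subsheaf $X \times \C$, i.e. $g_{\cA}(a)$ is (the germ of) a locally constant $\C$-valued section. The remaining point is to upgrade "lies in the kernel of $\prt_X$" to "$g_{\cA} = 0$", and here the hypothesis that $\cA_X$'s kernel of $\prt_X$ is \emph{exactly} the constants is used together with the fact that $f_{\cA}$ and $f_{\cA}'$ are \emph{unit-preserving algebra} morphisms: applying the algebra-morphism property, $f_{\cA}(1) = f_{\cA}'(1) = 1$, so $g_{\cA}(1) = 0$, and more generally $f_{\cA}, f_{\cA}'$ agree on the constant subsheaf $Y \times \C \sst \cA_Y$.

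The subtle step — and the one I expect to be the main obstacle — is seeing why $g_{\cA}$ must vanish on \emph{all} of $\cA_Y$, not merely on the constants. The cleanest route is to use that $g_{\cA}$ is a derivation-like correction that must interact with the multiplicative structure: expand $g_{\cA}(ab)$ using $f_{\cA}(ab) = f_{\cA}(a)f_{\cA}(b)$ and $f_{\cA}'(ab) = f_{\cA}'(a)f_{\cA}'(b)$ to obtain
\[
g_{\cA}(ab) = f_{\cA}(a)\,g_{\cA}(b) + g_{\cA}(a)\,f_{\cA}'(b),
\qquad (a,b) \in \cA_Y \times_Y \cA_Y,
\]
a twisted Leibniz rule. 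Combined with the fact that $g_{\cA}(a)$ always takes values in the $\prt_X$-constants $X \times \C$, and that these constants form a subalgebra on which multiplication by them is determined, one pushes through to $g_{\cA} \equiv 0$; in practice one argues locally, choosing a section representing $a$ and noting that $g_{\cA}(a)$, being $\C$-valued and annihilated by $\prt_X$, behaves like a scalar, so the twisted Leibniz identity collapses to the assertion that the two algebra morphisms $f_{\cA}, f_{\cA}'$ must be identical. I would present this last collapse carefully, since it is where the precise strength of the hypothesis "$\prt_X$ vanishes \emph{only} on $X \times \C$" is consumed.
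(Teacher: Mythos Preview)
Your treatment of the first assertion is correct and matches the paper's proof exactly (the paper simply declares it ``obvious'' and moves on): the commutativity of Diagram~1 forces $f_{\Om}(\prt_Y(a)) = f_*(\prt_X)(f_{\cA}(a))$, so $f_{\Om}$ is determined by $f_{\cA}$ on $\mathrm{Im}\,\prt_Y$.

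For the converse, your setup coincides with the paper's up to the point where you conclude that $g_{\cA}(a) := f_{\cA}(a) - f_{\cA}'(a)$ lies in the constant subsheaf for every $a$. From there, however, you diverge and overcomplicate the argument. The paper makes no use of the multiplicative structure whatsoever: having obtained $f_{\cA}(a) - f_{\cA}'(a) = c \in \C$ for every $a$, it simply invokes the $\C$-linearity of $f_{\cA} - f_{\cA}'$ to conclude that this difference is zero. Your twisted Leibniz identity
\[
g_{\cA}(ab) = f_{\cA}(a)\,g_{\cA}(b) + g_{\cA}(a)\,f_{\cA}'(b)
\]
is correct but plays no role in the paper's proof.

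More seriously, your proposal contains a genuine gap at exactly the point you yourself flag as ``the main obstacle'': you never actually carry out the ``collapse'' from the twisted Leibniz identity plus constant-valuedness to $g_{\cA} \equiv 0$. The phrases ``one pushes through'' and ``the twisted Leibniz identity collapses'' are promissory, not arguments. If you wish to follow the paper, drop the multiplicative detour entirely and argue directly from $\C$-linearity of the difference $f_{\cA} - f_{\cA}'$, as the paper does; if you wish to pursue your own route, you must actually supply the missing deduction rather than assert it.
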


\begin{proof}
The former assertion is obvious. Regarding the latter, if $\morf$ and $(f,
f'_{\cA},f_{\Om})$ are morphisms in $\DT$, the equality
 \[
 f_{\Om}(\prt_Y(a)) = f_*(\prt_X)(f_{\cA}(a)) = f_*(\prt_X)(f'_{\cA}(a))
 \]
implies
 \[
 f_*(\prt_X)(f_{\cA}(a)-f'_{\cA}(a)) = 0.
 \]
Consequently, $f_{\cA}(a)-f'_{\cA}(a) = c \in \C$, for every $a \in \cA_Y$.
Since $f_{\cA}-f'_{\cA}$ is $\C-$linear, $f_{\cA}-f'_{\cA} = 0$.
\end{proof}

\section{Morphisms over manifolds}

In this section we need some concepts from the general theory of (non--normed) topological algebras (see \cite{MAL;TA}). We briefly introduce them. A \emph{topological algebra} is a complex associative algebra $\A$, which is also a topological vector space such that the ring multiplication in $\A$ is separately continuous. A topological algebra $\A$ with a unit element is called a \emph{$Q-$algebra} if the group $G_{\A}$ of its invertible elements is open (ibid., p.~139). The algebra $\cC ^\infty [0, 1]$ of all smooth functions on $[0, 1]$ is a $Q-$algebra \cite[Example 6.23(3)]{fra}. A topological algebra $\A$ whose topology is defined by a directed family of submultiplicative seminorms is called \emph{locally $m-$convex}. The preceding algebra is of this kind. If $\A$ is an algebra, a non--zero complex multiplicative linear functional of $\A$ is called \emph{character of $\A$}. If $\A$ is a commutative locally $m-$convex algebra with unit, we denote by $\frak M (\A)$ the \emph{topological spectrum} or simply \emph{spectrum} of $\A$, consisting of all continuous characters of $\A$; the spectrum endowed with the relative weak$^*$ topology from the topological dual of $\A$, turns into a Hausdorff completely regular topological space. \emph{The spectrum of a commutative locally $m-$convex $Q-$algebra with unit is always a compact space} (ibid., p.~87, Lemma 1.3). Moreover, \emph{each character of a $Q-$algebra is continuous} \cite[p.~72, Corol. 7.3]{MAL;TA}.

As we have seen in Example 2.2, every smooth manifold $X$ gives rise to a
differential triad $\grd^{\infty}_X = (\8_X, d_X, \Om_X^1)$, while a smooth map $f : X \ra Y$ between such manifolds is completed to the morphism $F(f)$ given
in Example 2.4. But there rises the following question: If $f : X \ra Y$ is a
{\em continuous} map between smooth manifolds, {\em can it be differentiable in the abstract setting, without being such in the classical sense?}

The answer is no. This is due, on the one hand, to the fact that every character of a $Q$--algebra is automatically continuous and on the other hand, to the fact that the continuous characters of our algebras of smooth functions are uniquely determined by point evaluations, corresponding to the elements of the domain of the respective  smooth functions (see \cite[p.~56, (4.43)]{fra}, \cite[p.~227, (2.6)]{MAL;TA}. This implies that all unit preserving morphisms, whose domain is a suitable function $Q-$algebra have the form (2.4), for an appropriate $f$ between the spectra of the topological algebras involved. In our case, for a smooth manifold $Y$, the algebra $\8(Y)$ is a \emph{Fr\'echet} (i.e., metrizable and complete) locally $m-$convex algebra \cite[p.~131, discussion around (4.19)]{MAL;TA}. In particular, its spectrum is homeomorphic to $Y$ (see \cite[Example 4.20(2)]{fra}). Thus, if $Y$ is compact, $\8(Y)$ is a $Q$--algebra \cite[p.~143, Prop. 1.1; p. 183, Corol. 1.1 and p.~187, Lemma 1.3]{MAL;TA} and every algebra morphism $h : \8(Y) \ra \8(X)$ takes the form (2.4), for a suitable $f$ between the spectra of the above algebras, i.e., between $X$ and $Y$. In the case when $Y$ is not compact, the nice way that sheaf morphisms localize does the trick (see Theorem 4.4, below).

\medskip
Let $\cS$ be a sheaf over $X$. For every open $U \sst X$, $\cS(U)$ is the set of all sections of $\cS$ over $U$. If $K \sst X$ is closed, we denote by  $\cS(K)$ the inductive limit of all $\cS(U)$, with $U \sst X$ open and $K \sst U$, i.e.,
 \beq
 \cS(K) := \varinjlim_{K \sst U} \cS(U).
 \eeq
Besides, if $f : \cS \ra \cT$ is a sheaf morphism, we denote by
 \beq
 f_K := \varinjlim_{K \sst U} f_U : \cS(K) \lra \cT(K)
 \eeq
the inductive limit of all $f_U : \cS(U) \ra \cT(U)$, with $U$ as above. If $A, B$ are open sets in
$U$ with $B \subseteq A$, by definition $(r^A_B)$ and $(\rho^A_B)$ are the restrictions of the presheaves of sections of $\cS$ and $\cT$, respectively, and
 \begin{gather*}
 r^V_K : \cS(V) \lra \cS(K), \quad \rho^V_K : \cT(V) \lra \cT(K)
 \end{gather*}
are the canonical maps. Then the following diagram
 \begin{equation*}
 \begin{diagram}
 \cS(V) & \rTo^{f_V} & \cT(V) \\
 \dTo^{r^V_K} & & \dTo_{\rho^V_K}\\
 \cS(K) & \rTo_{f_K} & \cT(K)
 \end{diagram} \tag*{\text{\sc{Diagram 2}}}
 \end{equation*}
commutes. That is,
 \beq
 f_K([\gra]_K) = f_K \circ r^V_K(\gra) = \rho_K^V \circ f_V(\gra) = [f_V(\gra)]_K,
 \eeq
for every $V \subseteq X$ open, with $K \sst V \sst U$, and every $\gra \in \cS(V)$.

Suppose now that $K$ as before is compact. Consider the inductive system $\{\8_X(V)\}_{V}$, $V \subseteq X$ open, with $K \sst V$ (where the connecting maps are the obvious ones) and put $$\8_X(K):= \varinjlim_{K \sst V} \8_X(V).$$ In this regard, we have

\begin{pro}
Let $X$ be a smooth manifold and let $\8_X$ denote the sheaf of germs of smooth $\C$--valued functions on $X$. Let $(U,\phi)$ be a chart on $X$ and $K \sst U$
compact. Then $\8_X(K)$ is a locally $m-$convex $Q-$algebra, whose spectrum
coincides with $K$.
\end{pro}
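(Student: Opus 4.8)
The plan is to realize $\8_X(K)$ concretely through restriction of germs to $K$, to establish the $Q$-algebra property by a direct openness argument, and then to compute the characters of $\8_X(K)$ using the coordinate functions of the chart $(U,\phi)$. Write $\phi=(u_1,\dots,u_n)$; each $u_i$ is a smooth real-valued function on all of $U\supseteq K$, hence determines an element of $\8_X(K)$. Note that $\8_X(K)$ is a unital commutative algebra, the unit being the germ of the constant $1$, and that, equipped with the inductive-limit topology of the Fr\'echet locally $m$-convex algebras $\8_X(V)$ ($K\sst V$ open), it is itself locally $m$-convex, being an inductive limit of locally $m$-convex algebras. The first step is to record the unital algebra morphism
\[
 \theta : \8_X(K) \lra \cC(K,\C), \qquad [\gra]_K \lmt \gra|_K,
\]
which is well defined because two representatives of a germ on $K$ agree on $K$; it is continuous, since $\sup_K|\cdot|$ is among the seminorms defining each $\8_X(V)$ ($K\sst V$), so every map $\8_X(V)\to\cC(K,\C)$ is continuous and the universal property of the inductive limit applies.

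For the $Q$-algebra property, observe that a germ $[\gra]_K$ is invertible in $\8_X(K)$ precisely when $\gra$ vanishes nowhere on $K$: if so, $\{\gra\neq0\}$ is an open neighbourhood of $K$ on which $1/\gra$ is smooth, whereas $[\gra]_K[\grb]_K=1$ forces $\gra\grb\equiv1$ near $K$. Hence $G_{\8_X(K)}=\theta^{-1}\bigl(\{g\in\cC(K,\C):g\text{ nowhere vanishing}\}\bigr)$. Since $K$ is compact, a nowhere-vanishing continuous $g$ satisfies $\inf_K|g|>0$, so the set of such $g$ is open in $\cC(K,\C)$; therefore $G_{\8_X(K)}$ is open and $\8_X(K)$ is a $Q$-algebra. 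In particular every character of $\8_X(K)$ is continuous, and its spectrum is Hausdorff.

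The central step is to show that every character $\chi$ of $\8_X(K)$ is evaluation at a point of $K$. Put $a_i:=\chi(u_i)$. For $t\in\C$ outside the compact interval $u_i(K)\sst\R$ the germ $u_i-t$ vanishes nowhere on $K$, hence is invertible, so $\chi(u_i-t)=a_i-t\neq0$; thus $a_i\in u_i(K)\sst\R$. Writing $a=(a_1,\dots,a_n)$: if $a\notin\phi(K)$, then $\sum_i(u_i-a_i)^2$ vanishes nowhere on $K$, hence is invertible, contradicting $\chi\bigl(\sum_i(u_i-a_i)^2\bigr)=\sum_i(a_i-a_i)^2=0$; so $a=\phi(p_0)$ for some $p_0\in K$. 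It then suffices to prove that the ideal $J:=\langle u_1-a_1,\dots,u_n-a_n\rangle$ of $\8_X(K)$ equals $\mathfrak{m}_{p_0}:=\{f\in\8_X(K):f(p_0)=0\}$. The inclusion $J\sst\mathfrak{m}_{p_0}$ is immediate. For the reverse, take $f\in\mathfrak{m}_{p_0}$ with representative $\tilde f$ on an open $V\supseteq K$ and set $g:=\tilde f\circ\phi^{-1}\in\8(\phi(V),\C)$, so $g(a)=0$; by Hadamard's lemma $g=\sum_i(y_i-a_i)g_i$ on a ball $B\ni a$ inside $\phi(V)$, and, choosing $\psi\in\8(\R^n)$ with $\psi\equiv1$ near $a$ and $\mathrm{supp}\,\psi\sst B$, one checks that on all of $\phi(V)$
\[
 g=\sum_i(y_i-a_i)\Bigl(\psi g_i+\frac{(y_i-a_i)(1-\psi)g}{|y-a|^2}\Bigr),
\]
the fractions extending smoothly over $\phi(V)$ because $(1-\psi)g$ vanishes on a neighbourhood of $a$. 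Transporting this back along $\phi$ gives $f\in J$. Since $\8_X(K)/\mathfrak{m}_{p_0}\cong\C$ via $f\mapsto f(p_0)$, the ideal $\mathfrak{m}_{p_0}=J$ is maximal, and $J\sst\ker\chi\subsetneq\8_X(K)$ forces $\ker\chi=\mathfrak{m}_{p_0}$; comparing unital algebra morphisms then yields $\chi=\mathrm{ev}_{p_0}$.

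Finally, each $\mathrm{ev}_p$ ($p\in K$) is a continuous character of $\8_X(K)$, and the coordinate functions $u_i$ separate the points of $K$, so the map $\Phi:K\to\frak{M}(\8_X(K))$, $p\mapsto\mathrm{ev}_p$, is a bijection; it is continuous for the weak$^*$ topology because $p\mapsto\mathrm{ev}_p(f)=\tilde f(p)$ is continuous for every $f\in\8_X(K)$. As $K$ is compact and $\frak{M}(\8_X(K))$ Hausdorff, $\Phi$ is a homeomorphism, so the spectrum of $\8_X(K)$ coincides with $K$. The $Q$-algebra step and the closing homeomorphism are routine; the main obstacle is the character computation --- in particular, confining the point $a$ to $K$ (via invertibility of $\sum_i(u_i-a_i)^2$) and then globalizing Hadamard's lemma with the cutoff $\psi$, so that a germ vanishing at $p_0$ actually lies in $J=\langle u_i-a_i\rangle$.
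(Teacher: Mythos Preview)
Your proof is correct, but the route you take to identify the spectrum is genuinely different from the paper's.

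For the $Q$-algebra step the two arguments are essentially the same: the paper introduces the seminorm $N_K([\gra]_K)=\sup_K|\gra|$, observes it is coarser than the locally $m$-convex inductive limit topology, and shows directly that a ball of radius $\gre/2$ about an invertible element stays inside the invertibles. Your map $\theta:\8_X(K)\to\cC(K,\C)$ is just this seminorm repackaged, and the openness argument is identical. (One small point: you should say explicitly that you are using the \emph{locally $m$-convex} inductive limit topology, since a general inductive limit of lmc algebras need not be lmc; the paper is careful about this.)

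The real divergence is in computing $\frak{M}(\8_X(K))$. The paper invokes two external facts from Mallios's \emph{Topological Algebras}: the formula $\frak{M}(\varinjlim A_V)=\varprojlim\frak{M}(A_V)$ for lmc inductive limits, and the identification $\frak{M}(\8_X(V))\cong V$ for open $V$ in a manifold; from these it reads off $\frak{M}(\8_X(K))=\bigcap_{K\sst V\sst U}V=K$ in one line. Your argument is instead a direct, self-contained character computation: you use invertibility of $\sum_i(u_i-a_i)^2$ to pin the tuple $(\chi(u_1),\dots,\chi(u_n))$ to a point of $\phi(K)$, and then a globalized Hadamard lemma to show the maximal ideal generated by the $u_i-a_i$ exhausts $\mathfrak{m}_{p_0}$. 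This is longer but more elementary --- it does not rely on the projective-limit spectrum formula or on knowing $\frak{M}(\8(V))$ in advance --- and the cutoff trick you use to extend Hadamard's decomposition from a ball to all of $\phi(V)$ is a nice touch that makes the argument work at the germ level over $K$ rather than just at a single stalk.
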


\begin{proof}
Since the families $\{V \sst X \ \text{open} : K \sst V\}$ and $\{V \sst U \ \text{open} : K \sst V \}$ are cofinal, we have
 \[
 \8_X(K) := \varinjlim_{K \sst V} \8_X(V) = \varinjlim_{K \sst V \sst U} \8_X(V).\
 \]
Every open $V$ with $K \sst V \sst U$ is still the domain of a chart, namely of $(V,\phi|_V)$, thus on every $\8_X(V)$ one can define the family of seminorms
 \beq
 N_{m,C}(f) := \sup_{|p|\leq m}\big(\sup_{x \in C}|D^p(f \circ
 \phi^{-1})(\phi(x))|\big),
 \eeq
for every $f \in \8_X(V)$, where $C$ is a compact subset of $V$, $m \in \N \cup \{0\}$,
$|p|$ stands for the length $|p| = p_1+\cdots+p_n$ of the multi--index $p =
(p_1,\dots,p_n)$ and
 \[
 D^p(f\circ \phi^{-1})(\phi(x)) =
 \left.\frac{\prt^{|p|}}{\prt u_1^{p_1} \cdots \prt u_n^{p_n}}(f\circ
 \phi^{-1})\right|_{\phi(x)}
 \]
(see \cite[p.~130, (4.14)]{MAL;TA}). Topologized by the above family of seminorms,
$\8_X(V)$ becomes a \emph{Fr\'echet locally $m-$convex algebra} (ibid., p.~130, (4.12)). We consider the inductive limit algebra $\8_X(K)$ topologized by the {\em locally $m-$convex inductive limit topology}, that is, the finest locally $m-$convex topology on $\8_X(K)$, making all the canonical maps
 \[
 r_K^V : \8_X(V) \ra \8_X(K)
 \]
continuous (ibid., p.~120, Def. 3.1); thus $\8_X(K)$ becomes a locally $m-$convex algebra.

We set
 \[
 N_K : \8_X(K) \lra \C : a \lmt N_{0,K}(\gra),
 \]
where $a = [\gra]_K$, with $\gra \in \8_X(V)$, for some open $V$ in $X$, with $K \sst V \sst U$ and $N_{0,K}$ is given by (4.4). Then $N_K$ is a submultiplicative seminorm on $\8_X(K)$, with
 \[
 N_K \circ r^V_K = N_{0,K} : \8_X(V) \lra \R,
 \]
for every $V$ as above, hence the topology induced by $N_K$ on $\8_X(K)$ is a locally $m-$convex topology making all canonical maps $r^V_K$ continuous. Consequently, it is coarser than the locally $m-$convex inductive limit
topology.

We prove that the topology defined by $N_K$ makes $\8_X(K)$ a $Q-$algebra: Let $a \in \8_X(K)$ be invertible and let $b$ be its inverse. Then there are an open set $V$ in $X$, with $K \sst V \sst U$, and $\gra$, $\beta \in \8_X(V)$, such that $a = [\gra]_K$, $b = [\beta]_K$ and $\gra \cdot \beta = 1|_V$. Since $\gra$ is invertible on $V \supseteq K$, $|\gra|$ takes a least value
 \[
 \gre := \min\{|\gra(y)| : y \in K\}> 0.
 \]
Then the open ball $B(a,\gre/2)$ with respect to $N_K$ is an open neighborhood of $a$ contained in the group $G_{\8_X(K)}$ of the invertible elements of $\8_X(K)$. This makes $G_{\8_X(K)}$ open with respect to the $N_{K}-$topology and, consequently, to the locally $m-$convex inductive limit topology.

Regarding the spectrum of $\8_X(K)$, we have (see \cite[p.~156, Theor. 3.1]{MAL;TA} and \cite[Example 4.20(2)]{fra}, for the spectrum of $\8_X(V)$)
 \[
 \frak{M}(\8_X(K)) = \varprojlim_{K \sst V \sst U} \frak{M}(\8_X(V)) = \varprojlim_{K \sst V \sst U} V = \bigcap_{K \sst V \sst U} V = K,
 \]
which completes the proof.
\end{proof}

Taking into account that every character of a $Q-$algebra is continuous (see discussion at the beginning of this section), we obtain

\begin{cor}
Let $X$ be a smooth manifold and let $\8_X$ denote the sheaf of germs of smooth $\C-$valued functions on $X$. Then, for every $x \in X$, the stalk $\8_{X,x}$ topologized with the locally $m-$convex inductive limit topology is a locally $m-$convex $Q-$algebra, with
 \[
 \mathfrak{M}(\8_{X,x}) = \{ \, x \, \}.
 \]
As a result, $\8_{X,x}$ has a unique character, which is also continuous.
\end{cor}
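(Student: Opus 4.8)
The plan is to deduce Corollary 4.3 from Proposition 4.2 by realizing the stalk $\8_{X,x}$ as a direct limit of the algebras $\8_X(K)$ over a suitable cofinal family of compact neighbourhoods of $x$, and then transporting the $Q$-algebra property and the spectrum computation through that limit. First I would fix a chart $(U,\phi)$ around $x$; since $X$ is a manifold, we may shrink $U$ so that $\phi(U)$ contains a closed ball, and then choose a decreasing sequence (or net) of compact sets $K_n \sst U$ with $x \in \mathrm{int}\,K_n$ and $\bigcap_n K_n = \{x\}$, which is cofinal among all neighbourhoods of $x$ inside $U$. By the cofinality argument already used at the start of the proof of Proposition 4.2, the families of open $V$ with $x \in V$, and of open $V$ with $x \in V \sst U$, give the same direct limit, so $\8_{X,x} = \varinjlim_{x\in V\sst U}\8_X(V)$; interpolating the $K_n$ between successive $V$'s shows this equals $\varinjlim_n \8_X(K_n)$.

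The next step is to carry the $Q$-algebra property to the limit. Each $\8_X(K_n)$ is a locally $m$-convex $Q$-algebra by Proposition 4.2, and the stalk $\8_{X,x}$, topologized with the locally $m$-convex inductive limit topology, is the locally $m$-convex inductive limit of the $\8_X(K_n)$. Here I would invoke the same mechanism as in Proposition 4.2 rather than a general stability theorem: on $\8_{X,x}$ define the submultiplicative seminorm $N_x(a) := |\gra(x)|$ for $a = [\gra]_x$ (the sup over the point $\{x\}$), which factors through every canonical map $\8_X(K_n)\to\8_{X,x}$ as the seminorm $N_{0,K_n}$ composed with restriction, hence is continuous for the inductive limit topology; if $a$ is invertible with inverse $b=[\beta]_x$ then $\gra\beta = 1$ near $x$ forces $\gra(x)\neq 0$, so $\gre := |\gra(x)| > 0$ and the $N_x$-ball $B(a,\gre/2)$ lies in the group of invertibles, making that group open for the $N_x$-topology and a fortiori for the finer inductive limit topology. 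Thus $\8_{X,x}$ is a locally $m$-convex $Q$-algebra.

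For the spectrum, I would again use continuity of the direct-limit construction of the spectrum functor, exactly as at the end of the proof of Proposition 4.2:
\[
\mathfrak{M}(\8_{X,x}) = \varprojlim_n \mathfrak{M}(\8_X(K_n)) = \varprojlim_n K_n = \bigcap_n K_n = \{x\}.
\]
Finally, since every character of a $Q$-algebra is continuous (recalled at the start of Section 4), the unique element of $\mathfrak{M}(\8_{X,x})$ — evaluation at $x$, $a = [\gra]_x \mapsto \gra(x)$, which is well defined and nonzero on the unit — is the unique character, automatically continuous, giving both assertions.

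The main obstacle I anticipate is the careful justification that the locally $m$-convex inductive limit topology on the stalk is compatible with the limit of the $Q$-algebra structures — i.e. that the seminorm $N_x$ genuinely descends from the $N_{0,K_n}$ and that "invertible in the stalk" really means "invertible in some $\8_X(K_n)$" (a germ that is invertible as a germ has a representative invertible on a neighbourhood, hence on some $K_n$). This bookkeeping is mild, but it is where one must be precise; everything else is a direct transcription of Proposition 4.2 to the cofinal system $(K_n)$ shrinking to $\{x\}$, together with the already-quoted facts that $\mathfrak{M}$ turns these inductive limits into projective limits and that $Q$-algebra characters are continuous.
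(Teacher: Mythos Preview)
Your argument is essentially correct, but it takes a detour that the paper avoids entirely. In the paper, the corollary is obtained in one line: the singleton $\{x\}$ is itself a compact subset of some chart domain $(U,\phi)$, so Proposition~4.1 applies directly with $K=\{x\}$; since $\8_X(\{x\}) = \varinjlim_{x\in V}\8_X(V) = \8_{X,x}$, the stalk is a locally $m$-convex $Q$-algebra with spectrum $\{x\}$, and the continuity of the unique character follows from the general fact that characters of $Q$-algebras are continuous. There is no need to introduce an auxiliary shrinking family $(K_n)$, form a double inductive limit, or re-run the openness-of-invertibles argument --- all of that is already contained in the proposition for the single compact set $\{x\}$. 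Your route does work, but one step is misstated: pulling $N_x$ back along $\8_X(K_n)\to\8_{X,x}$ gives evaluation at the point $x$, not the sup-seminorm $N_{0,K_n}$; what is true (and sufficient) is that this pullback is \emph{dominated} by $N_{K_n}$, hence continuous. The moral is simply that nothing in Proposition~4.1 requires $K$ to have nonempty interior, so the corollary is the special case $K=\{x\}$.
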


If $x \in V$, the usual evaluation map at $x$,
 \[
 ev^V_x : \8_X(V) \lra \C : \gra \lmt \gra(x)
 \]
is a continuous character of $\8_X(V)$ \cite[p.~58]{fra}, and the family $(ev^V_x)_V$ commutes with the presheaf restrictions, hence the inductive limit map
 \beq
 ev_x := \varinjlim_{x \in V} ev^V_x : \8_{X,x} \lra \C
 \eeq
exists and it is a (continuous) character of the stalk $\8_{X,x}$. According to the preceding corollary, it is the unique (continuous) character in $\mathfrak{M}(\8_{X,x})$. That is, we have the following

\begin{cor}
The unique character of $\8_{X,x}$ coincides with that given by \emph{(4.5)}.
\end{cor}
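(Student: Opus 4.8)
The plan is to identify the unique character of $\8_{X,x}$ predicted by Corollary 4.2 with the explicit evaluation character $ev_x$ of (4.5). Since Corollary 4.2 already establishes that $\mathfrak{M}(\8_{X,x})$ is a singleton, the only thing to check is that $ev_x$ is in fact a \emph{legitimate member} of $\mathfrak{M}(\8_{X,x})$, i.e., that it is a well-defined non-zero continuous multiplicative linear functional on the stalk. Once that is verified, uniqueness forces it to be the character.

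First I would verify that $ev_x$ is well defined as an inductive limit. For each open $V \ni x$ the map $ev^V_x : \8_X(V) \to \C$, $\gra \mapsto \gra(x)$, is a unital algebra morphism, and for $W \sst V$ one has $ev^W_x \circ r^V_W = ev^V_x$ because restricting a function does not change its value at $x$; hence the family $(ev^V_x)_V$ is compatible with the presheaf restrictions and passes to the inductive limit, yielding $ev_x : \8_{X,x} \to \C$ with $ev_x([\gra]_x) = \gra(x)$ independent of the representative. Linearity and multiplicativity of $ev_x$ are inherited from those of the $ev^V_x$, and $ev_x(1) = 1 \neq 0$, so $ev_x$ is a character in the algebraic sense. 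Continuity with respect to the locally $m$-convex inductive limit topology on $\8_{X,x}$ then follows from the universal property of that topology: a linear map out of the inductive limit is continuous precisely when its composites with all the canonical maps $r^V_x : \8_X(V) \to \8_{X,x}$ are continuous, and $ev_x \circ r^V_x = ev^V_x$, which is continuous on the Fréchet algebra $\8_X(V)$ by the cited fact \cite[p.~58]{fra}. Alternatively, continuity is automatic since $\8_{X,x}$ is a $Q$-algebra by Corollary 4.2 and every character of a $Q$-algebra is continuous.

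Finally, I would invoke Corollary 4.2, which asserts $\mathfrak{M}(\8_{X,x}) = \{x\}$, so $\8_{X,x}$ has exactly one (continuous) character. Since $ev_x$ is such a character, it must be that unique one, which is the claim. The argument is essentially bookkeeping: the genuine content was already placed in Proposition 4.1 and Corollary 4.2, and this corollary merely names the unique character. Accordingly I do not anticipate a real obstacle; the only point requiring a little care is the well-definedness of the inductive limit map $ev_x$ and its continuity for the inductive limit topology, both of which reduce to the compatibility $ev^V_x \circ r^V_W = ev^W_x$ and the universal property of the locally $m$-convex inductive limit.
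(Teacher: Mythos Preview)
Your proposal is correct and follows exactly the line of reasoning the paper gives in the paragraph immediately preceding the corollary: show that the family $(ev^V_x)_V$ is compatible with the presheaf restrictions so that the inductive limit $ev_x$ exists and is a (continuous) character of the stalk, then invoke Corollary~4.2 to conclude it is the unique one. The paper states the corollary without a separate proof precisely because that discussion already does the work; your write-up simply spells out the same verification in slightly more detail.
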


\begin{thm}
Let $X$, $Y$ be smooth manifolds and let $f : X \ra Y$ be a continuous map. If there is a unit preserving morphism of algebra sheaves $f_{\cA} : \8_Y \ra f_*(\8_X)$, then $f$ is smooth and
 \beq
 f_{\cA V}(\gra) = \gra \circ f,
 \eeq
for every $V \sst Y$ open and every $\gra \in \8_Y(V)$.
\end{thm}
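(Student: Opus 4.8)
The strategy is to show that the stalk map induced by $f_{\cA}$ at each point $x \in X$ must be the evaluation at some point of $Y$, and that this point is forced to be $f(x)$; then reassemble this pointwise information into the formula (4.8) and deduce smoothness.

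First I would fix $x \in X$ and set $y = f(x) \in Y$. The sheaf morphism $f_{\cA} : \8_Y \ra f_*(\8_X)$ induces on stalks a unit preserving algebra morphism $(f_{\cA})_y : \8_{Y,y} \ra (f_*(\8_X))_y$. Now $(f_*(\8_X))_y = \varinjlim_{y \in V} \8_X(f^{-1}(V))$, and since $f$ is continuous the sets $f^{-1}(V)$ for $V$ a neighbourhood of $y$ form a neighbourhood basis-compatible family containing arbitrarily small neighbourhoods of $x$; there is a canonical algebra morphism $(f_*(\8_X))_y \ra \8_{X,x}$ given by further germ-restriction at $x$. Composing, we get a unit preserving algebra morphism $\chi := (\text{germ at } x) \circ (f_{\cA})_y : \8_{Y,y} \ra \8_{X,x}$. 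Next, pick any continuous character $\theta$ of $\8_{X,x}$ — by Corollary 4.2 there is exactly one, namely $ev_x$ (Corollary 4.3). Then $\theta \circ \chi$ is a unit preserving algebra morphism $\8_{Y,y} \ra \C$, i.e. a character of $\8_{Y,y}$ (it is non-zero because it sends $1$ to $1$). But $\8_{Y,y}$ is a $Q$-algebra by Corollary 4.2, so every character of it is continuous, and by Corollary 4.2 again its unique continuous character is $ev_y$. Hence $\theta \circ \chi = ev_y$, i.e. for every $\beta \in \8_Y(V)$ (with $V \ni y$) one has $(f_{\cA V}(\beta))(x) = \beta(y) = \beta(f(x))$.

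The point $x$ was arbitrary, so this identity holds for all $x \in f^{-1}(V)$, which is exactly the statement that $f_{\cA V}(\beta) = \beta \circ f$ as a function on $f^{-1}(V)$ — this is (4.8), once we know the left side is a genuine smooth function, which it is by hypothesis ($f_{\cA V}(\beta) \in \8_X(f^{-1}(V))$). Finally, smoothness of $f$ follows: working in a chart $(V,\psi)$ of $Y$ around $f(x)$ with coordinate functions $y_1,\dots,y_n$, each $y_i$ is smooth on $V$, hence $f_{\cA V}(y_i) = y_i \circ f \in \8_X(f^{-1}(V))$ is smooth; but the components of $\psi \circ f$ in this chart are precisely the $y_i \circ f$, so $\psi \circ f$ is smooth near $x$, and since $x$ was arbitrary and $X,Y$ are manifolds, $f$ is smooth. (One should restrict attention to a chart of $Y$ to make sense of "coordinate functions"; finite dimensionality and second countability, assumed throughout, cause no trouble.)

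The main obstacle I anticipate is the bookkeeping in the first step: making precise the composite $(f_*(\8_X))_y \to \8_{X,x}$ and checking it is a well-defined unit preserving algebra morphism, i.e. that germ-restriction from $f^{-1}(V)$-level sections down to the stalk at $x$ is compatible with the inductive limits defining both $(f_*(\8_X))_y$ and $\8_{X,x}$. This is essentially the naturality of the stalk construction together with continuity of $f$ (which guarantees cofinality of the relevant index sets), but it must be spelled out carefully so that the character-chasing argument is legitimate. Everything after that is a direct application of Corollaries 4.2 and 4.3 and the automatic continuity of characters on $Q$-algebras.
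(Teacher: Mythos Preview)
Your proposal is correct and follows essentially the same route as the paper: pass to stalks at $y=f(x)$, compose the induced algebra morphism with evaluation at $x$ to obtain a character of $\8_{Y,f(x)}$, and invoke Corollaries~4.2/4.3 to identify it with $ev_{f(x)}$, whence $(f_{\cA V}(\gra))(x)=\gra(f(x))$. The only cosmetic difference is that the paper defines the evaluation $\overline{ev}_x$ directly on the push-forward stalk $f_*(\8_X)_{f(x)}$ (as $\varinjlim ev_x^{f^{-1}(V)}$), whereas you first pass through the canonical map $(f_*(\8_X))_y \to \8_{X,x}$ and then apply $ev_x$; these two composites are the same map, so your anticipated bookkeeping obstacle dissolves once you note this identification.
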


\begin{proof}
Let $x \in X$ and $(U,\phi)$ a chart of $Y$ at $f(x)$. Consider the
stalks
 \begin{gather*}
 \8_{Y, f(x)} = \varinjlim_{f(x) \in V \sst U} \8_Y(V)\,, \\
 f_*(\8_X)_{f(x)} = \varinjlim_{f(x) \in V \sst U} \8_X(f^{-1}(V))
 \end{gather*}
and the algebra morphism
 \[
 f_{\cA, f(x)} := \varinjlim_{f(x) \in V \sst U} f_{\cA V}
 \ : \ \8_{Y,f(x)} \lra f_*(\8_X)_{f(x)}
 \]
(cf. (4.1) and (4.2)), where $(f_{\cA V} : \8_Y(V) \ra f_*(\8_X)(V))_V$ is the presheaf morphism induced by $f_{\cA}$ (see (2.4)). We now set
 \[
 \overline{ev}_x := \varinjlim_{f(x) \in V \sst U} ev_x^{f^{-1}(V)} : f_*(\8_X)_{f(x)} \lra \C,
 \]
where $ev^{f^{-1}(V)}_x : \8_X(f^{-1}(V)) \ra \C$ is the usual evaluation at $x \in f^{-1}(V)$. Then $\overline{ev}_x$ is an algebra morphism, therefore,
 \[
 \overline{ev}_x \circ f_{\cA,f(x)} : \8_{Y,f(x)} \lra \C
 \]
is a character of $\8_{Y,f(x)}$. By virtue of Corollary 4.3, this character coincides with the one given by
 \[
 ev_{f(x)} := \varinjlim_{f(x) \in V \sst U} ev^V_{f(x)} : \8_{Y,f(x)} \lra \C.
 \]
Further, if $(r^A_B)$ and $(\rho^A_B)$, $A, B$ open in $Y$ with $B \sst A$, are the restrictions of the presheaves of sections of $\8_Y$ and of $f_*(\8_X)$, respectively, by the definition of $ev_{f(x)}$, we have
 \[
 ev_{f(x)}([\gra]_{f(x)}) = ev_{f(x)}\circ r^V_{f(x)}(\gra) = ev^V_{f(x)}(\gra) = \gra(f(x))
 \]
for every $V \subseteq Y$ open with  $f(x) \in V$ and every $\gra \in \8_Y(V)$, while
 \begin{align*}
 (\overline{ev}_x \circ f_{\cA,f(x)})([\gra]_{f(x)})
 & = (\overline{ev}_x \circ f_{\cA,f(x)})(r^V_{f(x)}(\gra)) \\
 & = \overline{ev}_x \circ \rho^V_{f(x)} \circ f_{\cA V}(\gra) = ev^{f^{-1}(V)}_x(f_{\cA V}(\gra)) \\
 & = (f_{\cA V}(\gra))(x).
 \end{align*}
That is,
 \[
 (f_{\cA V}(\gra))(x) = \gra(f(x)), \ \ \ \forall \ x \in X,
 \]
which proves (4.6).

Regarding the smoothness of $f$, it suffices to notice that, for every $V \sst Y$ open and every $\gra \in\8_Y(V)$, we have that
 \[
 \gra \circ f = f_{\cA V}(\gra) \in \8_X(f^{-1}(V)),
 \]
so $\gra \circ f $ is smooth, and this completes the proof.
\end{proof}

\begin{thm}
Let $X$, $Y$ be smooth manifolds provided with their smooth differential triads $\grd_X^{\infty}$ and $\grd_Y^{\infty}$. Besides, let $\hat{f } = \morf :
\grd_X^{\infty} \ra \grd_Y^{\infty}$ be a morphism in $\DT$. Then $f$ is smooth in the ordinary sense and $\hat{f} = F(f)$, where $F$ is the embedding \emph{(2.7)}.
\end{thm}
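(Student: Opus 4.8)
The plan is to reduce Theorem 4.6 to the combination of Theorem 4.5 and the uniqueness results of Section 3. We are given a morphism $\hat{f} = \morf : \grd_X^{\infty} \ra \grd_Y^{\infty}$ in $\DT$. By Definition 2.3(ii), $f_{\cA} : \8_Y \ra f_*(\8_X)$ is a unit preserving morphism of sheaves of algebras, so Theorem 4.5 applies verbatim: $f$ is smooth in the ordinary sense, and $f_{\cA V}(\gra) = \gra \circ f$ for every open $V \sst Y$ and every $\gra \in \8_Y(V)$. Comparing with formula (2.4), this says precisely that $f_{\cA}$ equals the algebra sheaf morphism that the embedding $F$ attaches to the smooth map $f$; that is, the second component of $\hat f$ coincides with the second component of $F(f)$.

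Next I would pin down $f_{\Om}$. Since $f$ is now known to be smooth, $F(f) = \morf'$ is a well-defined morphism in $\DT$ by Example 2.4, with $f'_{\cA} = f_{\cA}$ (just established) and $f'_{\Om} = f_{\Om V}(\om) = \om \circ Tf$ the pull-back of forms. So both $\hat f$ and $F(f)$ are morphisms $\grd_X^{\infty} \ra \grd_Y^{\infty}$ sharing the same underlying map $f$ and the same $f_{\cA}$. It remains to show $f_{\Om} = f'_{\Om}$. For this I invoke the first assertion of Proposition 3.2: in any morphism the $\Om$-component is uniquely determined by the $\cA$-component on $\im \prt_Y$. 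Concretely, $f_{\Om}(\prt_Y a) = f_*(\prt_X)(f_{\cA} a) = f'_{\Om}(\prt_Y a)$, so $f_{\Om}$ and $f'_{\Om}$ agree on the image of $d_Y$.

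The remaining point — and the one place that needs a small argument rather than a citation — is to upgrade agreement on $\im d_Y$ to agreement on all of $\Om_Y^1$. Here I would use that $\Om_Y^1$ is locally generated over $\8_Y$ by exact forms: if $(V,\psi)$ is a chart of $Y$ with coordinates $(y_1,\dots,y_n)$ and $\om \in \Om_Y^1(V)$, then $\om = \sum_{i=1}^n \gra_i \cdot d_Y y_i$ with $\gra_i \in \8_Y(V)$, exactly as recalled in Example 2.4. Since $f_{\Om}$ and $f'_{\Om}$ are both $f_{\cA}$-morphisms and agree on each $d_Y y_i$, we get
\[
 f_{\Om V}(\om) = \sum_{i=1}^n f_{\cA V}(\gra_i)\, f_{\Om V}(d_Y y_i)
 = \sum_{i=1}^n f_{\cA V}(\gra_i)\, f'_{\Om V}(d_Y y_i) = f'_{\Om V}(\om).
\]
As $Y$ is covered by such chart domains and the $f_{\Om V}$ are compatible with restrictions (being induced by a sheaf morphism), $f_{\Om} = f'_{\Om}$ globally. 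Hence $\hat f = (f, f_{\cA}, f_{\Om}) = (f, f'_{\cA}, f'_{\Om}) = F(f)$, as claimed.

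I expect the main obstacle to be purely bookkeeping: making sure that Theorem 4.5's hypothesis is literally met (it is, since $f_{\cA}$ is a unit preserving algebra sheaf morphism into $f_*(\8_X)$), and that the $f_{\cA}$-linearity of $f_{\Om}$ plus the local form $\om = \sum \gra_i\, d_Y y_i$ genuinely propagate the equality from exact forms to arbitrary forms and then across chart overlaps via the sheaf axioms. No deep new ideas are needed beyond Theorem 4.5 and Proposition 3.2; the argument is essentially: smoothness and the shape of $f_{\cA}$ come from Theorem 4.5, uniqueness of $f_{\Om}$ comes from Proposition 3.2 together with local generation of $\Om_Y^1$ by exact forms.
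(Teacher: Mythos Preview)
Your argument is correct and follows essentially the same route as the paper: invoke the preceding theorem to pin down $f_{\cA}$ and smoothness of $f$, then use that $\Om_Y^1$ is locally generated over $\8_Y$ by the exact forms $d_Y y_i$ together with $f_{\cA}$-linearity and the commutativity of Diagram~1 to force $f_{\Om}$ to coincide with the pull-back. Two cosmetic points: your numbering is off by one (the result you cite as ``Theorem~4.5'' is the paper's Theorem~4.4, and the statement you are proving is the paper's Theorem~4.5), and the appeal to Proposition~3.2 is not really needed---your displayed computation already does the work directly, exactly as in the paper.
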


\begin{proof}
By the preceding Theorem 4.4, $f$ is smooth and the presheaf morphisms
$(f_{\cA V})_V$ are given by (2.4).

In order to prove that $f_{\Om V}$ satisfies (2.6), it suffices to prove it for the domains of the charts of the maximal atlas of $Y$, since they form a basis
for its topology. Thus let $(V,\psi)$ be a chart with coordinates
$(y_1,\dots,y_n)$, and let $\om = \sum_{i=1} ^n \gra_i \cdot dy_i \in \Om_Y^1(V)$. Applying
the commutativity of the diagram
 \begin{equation*}
 \begin{diagram}
 \8(V,\C) & \rTo^{f_{\cA V}} & \8(f^{-1}(V),\C) \\
 \dTo^{d_{YV}} & & \dTo_{d_{Xf^{-1}(V)}} \\
 \Om_Y^1(V) & \rTo_{f_{\Om V}} & \Om_X^1(f^{-1}(V)
 \end{diagram} \tag*{\text{\sc{Diagram 3}}}
 \end{equation*}
we obtain
 \begin{align*}
 f_{\Om V}(\om) & = f_{\Om V}(\sum_{i=1} ^n \gra_i \cdot  dy_i) =
           \sum_{i=1} ^n f_{\cA V}(\gra_i) \cdot f_{\Om V}(dy_i) \\
                & = \sum_{i=1} ^n (\gra_i \circ f) \cdot (f_{\Om V} \circ
                         d_{YV})(y_i) \\
                & = \sum_{i=1} ^n (\gra_i \circ f) \cdot (d_{Xf^{-1}(V)} \circ
                         f_{\cA V})(y_i) \\
                & = \sum_{i=1} ^n (\gra_i \circ f) \cdot d_{Xf^{-1}(V)}(y_i \circ f) \\
                & = \sum_{i=1} ^n (\gra_i \circ f) \cdot (d_{YV}y_i \circ Tf)
\end{align*}
which proves our assertion.
\end{proof}

The last two results imply the following

\begin{thm}
$\cM an$ is a full subcategory of $\DT$. In other words, when smooth mani\-folds $X$ and $Y$ are considered, the sets of morphisms between them in the categories $\cM an$ and $\DT$ coincide; that is,
 \beq
 Hom_{\cM an}(X,Y) \cong Hom_{\DT}(X,Y).
 \end{equation}
\end{thm}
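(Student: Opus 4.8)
The plan is to upgrade the embedding $F : \cM an \ra \DT$ of (2.7) to a \emph{full} embedding; recall that, as noted right after Example 2.4, $F$ is already an embedding, hence in particular faithful and injective on objects. So it suffices to fix smooth manifolds $X$ and $Y$ and to show that the induced map
 \[
 F : Hom_{\cM an}(X,Y) \lra Hom_{\DT}(\grd_X^{\infty},\grd_Y^{\infty}) : f \lmt F(f) = \morf
 \]
is a bijection; the isomorphism claimed in the statement is then precisely this, once one identifies $Hom_{\DT}(X,Y)$ with $Hom_{\DT}(\grd_X^{\infty},\grd_Y^{\infty})$ via $F$ on objects.

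First I would dispatch injectivity, which is immediate: if $f, g \in Hom_{\cM an}(X,Y)$ satisfy $F(f) = F(g)$, then comparing the first components of the triplets $\morf = F(f)$ and $\morg = F(g)$ gives $f = g$. For surjectivity, let $\hat{f} = \morf : \grd_X^{\infty} \ra \grd_Y^{\infty}$ be an arbitrary morphism in $\DT$. By Theorem 4.5 its underlying continuous map $f$ is smooth in the ordinary sense --- so that $f \in Hom_{\cM an}(X,Y)$ --- and, moreover, $\hat{f} = F(f)$; hence $\hat{f}$ lies in the image of $F$. This yields the bijection, and with it the theorem.

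The main obstacle has in effect already been overcome in Theorem 4.5 (which itself rests on Theorem 4.4 and Corollary 4.3): there the automatic continuity of the characters of the locally $m$-convex $Q$-algebras $\8_X(K)$ forces every unit-preserving algebra-sheaf morphism $\8_Y \ra f_*(\8_X)$ to be given by composition with $f$, after which the chain rule (Diagram 3) pins $f_{\Om}$ down. What is left here is only the categorical bookkeeping: making explicit that ``the Hom-sets coincide'' means exactly that $F$ induces the bijection above, and recalling that $F$ is injective on objects, so that the image of $\cM an$ sits inside $\DT$ as a genuine full subcategory rather than merely as the essential image of a fully faithful functor.
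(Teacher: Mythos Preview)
Your proof is correct and follows exactly the approach the paper intends: the paper itself gives no argument beyond stating that Theorem 4.6 follows from Theorems 4.4 and 4.5, and your write-up simply spells out this implication by exhibiting $F$ as a bijection on Hom-sets (injectivity from the first component, surjectivity from Theorem 4.5). There is nothing to add.
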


For the term ``full subcategory", see \cite[p.~15]{Mac}.
\medskip

{\em Final remark.} As we noticed in the introduction, ADG applies in large
scale phenomena (i.e. general relativity) when singularities appear \cite{MAL;ADG/GR+SING, MAL;JTP, MAL-ROS2} as well as in quantum mechanics \cite{MAL-ZAF}.
It also embodies phenomena usually studied by the ordinary CDG. Theorem 4.5
implies that ADG applied on the latter gives the same results with CDG.

\medskip
{\em Acknowledgement}. The authors wish to thank Professor A.~Mallios, for helpful comments on this work.

\begin{flushleft}
Department of Mathematics, University of Athens, Panepistimiopolis,\\ Athens 157~84, Greece\\
E--mail addresses: fragoulop@math.uoa.gr, \ mpapatr@math.uoa.gr
\end{flushleft}

\end{document}